\theoremstyle{plain}
\newtheorem{theorem}{Theorem}
\newtheorem{conj}[theorem]{Conjecture}
\newtheorem{definition}[theorem]{Definition}
\newcommand{\R}{\mathbb{R}}
\newcommand{\eps}{\varepsilon}
\newcommand{\K}{\mathcal{K}}
\title{A note on the Steinitz Lemma}
\author{Gergely Ambrus}
\author{Rainie Heck}
\thanks{Research of G. A. was partially supported by the ERC Advanced Grant "GeoScape" no.  882971, by Hungarian National Research (NKFIH) grants no. KKP-133819, 147145, 147544, and 150151, which has been implemented with the support provided by the Ministry of Culture and Innovation of Hungary from the National Research, Development and Innovation Fund, financed under the ADVANCED-24 funding scheme. This research was funded by the grant 2024-1.2.8-TÉT-IPARI-CN-2025-00011,
with the support provided by the National Research,
Development and Innovation Office from the National Research,
Development and Innovation Fund, and financed under the
2024-1.2.8-TÉT-IPARI-CN funding scheme. Research of R. H. is supported by the National Science Foundation Graduate Research Fellowship under Grant No. DGE-2140004. Research was supported by project no. TKP2021-NVA-09, which has been implemented with the support provided by the
Ministry of Innovation and Technology of Hungary from the National
Research, Development and Innovation Fund, financed under the
TKP2021-NVA funding scheme. 
}
\begin{document}

\begin{abstract}
We establish the connection between the Steinitz problem for ordering vector families in arbitrary norms and its variant for not necessarily zero-sum families consisting of `nearly unit' vectors.
\end{abstract}

\subjclass[2020]{52A40, 46N10}

\maketitle

\section{Framework and results}\label{sec:intro}

More than a century ago, in 1913, Steinitz~\cite{Steinitz1913} proved the following unexpected result.

\begin{theorem}[Euclidean Steinitz lemma]\label{thm:EuclSteinitz}
Given any finite family of vectors $V\subset \R^d$ of Euclidean norm at most 1 that sum to 0, one can order the elements of $V$ as $v_1, \ldots, v_n$ so that for every $k = 1, \ldots, n,$ 
\begin{equation}\label{eq:steinitz_criterion}
\left|  \sum_{i=1}^k v_i\right| \leq C
\end{equation}
where $C$ is a constant that depends only on the dimension $d$. 
\end{theorem}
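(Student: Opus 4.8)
The plan is to establish the stronger quantitative form in which $C=d$; any bound depending only on $d$, even a very crude one, would already prove the theorem as stated, which leaves some room to maneuver. Write $V=\{v_1,\dots,v_n\}$ and assume $n>d$, as otherwise every partial sum has norm at most $n\le d$ regardless of the order. Choosing an admissible ordering is the same as building a chain of subsets $\emptyset=A_0\subset A_1\subset\dots\subset A_n=V$ with $|A_k|=k$: if $v_{\pi(k)}$ denotes the unique element of $A_k\setminus A_{k-1}$, then $\sum_{i=1}^k v_{\pi(i)}=\sum_{v\in A_k}v$, so it suffices to keep $|\sum_{v\in A_k}v|\le d$ for every $k$. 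I would construct this chain from the top down, starting at $A_n=V$ and deciding at each stage which vector to delete.

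The construction is powered by a fractional certificate, maintained for every $k$ with $d\le k\le n$: there is a weight vector $\mu\in[0,1]^{A_k}$ with $\sum_{v\in A_k}\mu_v v=0$ and $\sum_{v\in A_k}\mu_v\ge k-d$. This certificate forces the required estimate, since
\[
\Bigl|\sum_{v\in A_k}v\Bigr|=\Bigl|\sum_{v\in A_k}(1-\mu_v)v\Bigr|\le\sum_{v\in A_k}(1-\mu_v)=k-\sum_{v\in A_k}\mu_v\le d .
\]
It holds at the start: for $A_n=V$ take the uniform weights $\mu_v\equiv 1-d/n\in[0,1]$, whose total is $n-d$ and for which $\sum_{v\in V}\mu_v v=(1-d/n)\sum_{v\in V}v=0$ by hypothesis. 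Moreover, once $k$ has dropped to $d$ the remaining choices are free: any ordering of the final $d$ vectors keeps all partial sums of length at most $d$ within norm $d$ automatically. So the whole problem reduces to propagating the certificate while $k>d$.

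The technical heart is the step from $A_k$ to $A_{k-1}$, i.e.\ selecting $v^\ast\in A_k$ to remove so that the certificate survives on $A_{k-1}=A_k\setminus\{v^\ast\}$. I would first replace $\mu$ by a vertex of the polytope $P=\{\nu\in[0,1]^{A_k}:\sum_{v}\nu_v v=0\}$ at which the linear functional $\nu\mapsto\sum_v\nu_v$ attains its maximum over $P$; since $P\subseteq\R^{A_k}$ is carved out by just $d$ linear equalities, such a vertex has at most $d$ coordinates lying strictly inside $(0,1)$, hence at least $k-d\ge 1$ coordinates equal to $0$ or to $1$. If some of these equals $0$, say $\mu_{v^\ast}=0$, then deleting $v^\ast$ is harmless: $\sum_{v\in A_{k-1}}\mu_v v=0$ still, and $\sum_{v\in A_{k-1}}\mu_v=\sum_{v\in A_k}\mu_v\ge k-d\ge (k-1)-d$, so the certificate passes to $A_{k-1}$ unchanged.

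The case I expect to be the genuine obstacle is when the maximizing vertex has \emph{no} zero coordinate. Then all of its integral coordinates equal $1$, so $A_k$ splits as $O\sqcup F$ with $\mu\equiv 1$ on $O$, with $|F|\le d$, and with $\mu|_F$ a vertex of the induced face whose coordinates are all fractional; this pins $\mu|_F$ by the equality constraints alone, so $F$ must be linearly independent and $\sum_{v\in O}v=-\sum_{v\in F}\mu_v v$. Now removing a vector disturbs the identity $\sum\mu_v v=0$, so one has to re-solve the small linear system supported on the linearly independent block $F$ (possibly together with lowering some weights on $O$), exploiting that every $\mu_v$ with $v\in F$ has room on both sides inside $[0,1]$; showing that a suitable $v^\ast$ and a feasible update always exist — in other words, that one can never get stuck — is the crux, and is presumably what forces one to carry a slightly stronger invariant than the bare certificate above. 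The remaining bookkeeping is routine, and should this delicate case prove stubborn one can retreat to a coarser, still dimension-dependent, bound, which suffices for the statement.
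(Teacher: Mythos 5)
The paper does not prove Theorem~\ref{thm:EuclSteinitz} itself: it is quoted as Steinitz's classical result, and the bound $S(B)\le d$ (Theorem~\ref{thm:GeneralSteinitz}) is cited to~\cite{Steinitz1913,SevastyanovSteinitz,grinberg_sevastyanov_ValueOfSteinitzConstant}, with the Grinberg--Sevast'yanov linear-dependence argument singled out in the historical section. Your proposal reproduces exactly the skeleton of that argument --- a descending chain $A_n\supset\cdots\supset A_d$, a fractional certificate $\mu\in[0,1]^{A_k}$ with $\sum\mu_v v=0$ and $\sum\mu_v\ge k-d$, vertices of the feasibility polytope, and deletion of a coordinate at $0$ --- so it is the ``right'' proof, but the inductive step is left with a genuine hole, which you yourself flag.

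The hole is real: a vertex maximizing $\sum_v\nu_v$ over $P=\{\nu\in[0,1]^{A_k}:\sum_v\nu_v v=0\}$ has at most $d$ fractional coordinates, but nothing forces one of the remaining $k-d$ integral coordinates to equal $0$ rather than $1$, and if they are all $1$ you cannot delete a vector while preserving both $\sum\mu_v v=0$ and the weight bound. Your guess that one must ``carry a slightly stronger invariant'' is not quite the point; the invariant is fine, and the fix is to change which polytope and which vertex you inspect. Replace the inequality $\sum_v\nu_v\ge k-d$ by the \emph{equality} $\sum_v\nu_v=(k-1)-d$, i.e.\ work with
\[
Q_k=\Bigl\{\nu\in[0,1]^{A_k}:\ \sum_{v\in A_k}\nu_v\,v=0,\ \ \sum_{v\in A_k}\nu_v=(k-1)-d\Bigr\},
\]
which is nonempty whenever the old certificate on $A_k$ is (scale it by $\tfrac{k-1-d}{k-d}\in[0,1]$). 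Now there are $d+1$ equality constraints, so a vertex $\nu$ of $Q_k$ has at most $d+1$ coordinates strictly in $(0,1)$ and at least $k-d-1$ coordinates in $\{0,1\}$. If none of these were $0$, they would all be $1$; then with $f\le d+1$ fractional coordinates the total weight lies in $(\,k-f,\ k\,]$, which forces $f>d+1$ to be compatible with the prescribed total $k-d-1$ --- a contradiction (and $f=0$ gives total $k\ne k-d-1$). Hence some $\nu_{v^\ast}=0$; deleting $v^\ast$ leaves $\nu|_{A_{k-1}}\in[0,1]^{A_{k-1}}$ with $\sum\nu_v v=0$ and $\sum\nu_v=(k-1)-d$, exactly the certificate for $A_{k-1}$, and the induction closes. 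The rest of your write-up (the base case $\mu\equiv 1-d/n$ on $A_n$, the reduction of partial sums to $\sum_{v\in A_k}v$, and the trivial tail $k\le d$) is correct.
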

Here and later on, $|.|$ denotes the Euclidean norm on $\R^d$. 
Steinitz proved the statement with the constant $C = 2 d$; we will later see that stronger estimates exist. 
It is natural to ask for the smallest value of $C$ for which \eqref{eq:steinitz_criterion} is guaranteed to hold; this will be the focus of the paper. 

Let us introduce some notations and definitions.  For a finite family of vectors $V$, $\Sigma(V):=\Sigma_{v \in V}v$ will stand for the sum of the elements of $V$.  For $n \in \mathbb{N}_+$, we set $[n]:=\{ 1, \ldots, n \}$.  $B_p^d$ will denote the unit ball of the $\ell_p$-norm $\|.\|_p$ on $\R^d$ for $1 \leq p \leq \infty$. Let $\K_o^d$ denote the class of convex bodies in $\mathbb{R}^d$ (i.e., compact convex sets with non-empty interior) that contain the origin in their interior. For $B \in \K_o^d$, the Minkowski norm generated by $B$ is defined as
\begin{equation*}
    \|x\|_B:=\inf\{r\geq 0:\ x\in rB\}.
\end{equation*}
Note that this is a norm on $\R^d$ in the classical sense only when $B$ is symmetric about $0$, that is $B=-B$; otherwise, $\|.\|_B$ is homogeneous only for positive scalars -- in this case $\|.\|_B$ is called an asymmetric norm. The term `norm' will be used in this paper in a general sense that encompasses both cases. A convex body $K$ will be called symmetric if it is symmetric about the origin.

\begin{definition}[Steinitz constant] \label{def:steinitzconstant}
   Let $B\in \mathcal{K}_0^d$. The \emph{Steinitz constant of} $B$, denoted $S(B)$, is the smallest number $C$ for which any finite family of vectors $V\subset B$ with $\Sigma(V)=0$ has an ordering $V=\{ v_1, \ldots, v_n \}$ along which each partial sum has norm at most~$C$. That is, for every $k \in [n]$, 
   \begin{equation}\label{sumbound}
    \Big\| \sum_{i \in [k]}  v_i  \Big\|_B \leq C.
   \end{equation}
\end{definition}

We remark that the term `constant' above refers to the fact that  $S(B)$ depends only on the choice of $B$, but not on the vector family $V \subset B$. We make no reference to the dimension $d$, as the value of the Steinitz constant is independent of $d$ as long as $B$ can be embedded in $\R^d$.

We also consider an expanded variant of the Steinitz constant, where the zero-sum condition $\Sigma(V)=0$ on the vector family is dropped:

\begin{definition}[Relaxed Steinitz constant] \label{def:relaxedsteinitzconstant}
   For $B\in \mathcal{K}_0^d$, let $S^*(B)$ denote the smallest constant $C$ for which any finite family of vectors $V \subset B$  has an ordering $V=\{ v_1, \ldots, v_n \}$ so that 
   \begin{equation}\label{epssumbound}
    \Big\| \sum_{i \in [k]}  v_i  - \frac k n \Sigma(V) \Big\|_B \leq C
   \end{equation}
holds for every $k \in [n]$.
\end{definition}

The relationship with the original Steinitz constant is given by the simple chain of inequalities
\begin{equation}\label{relax-orig-Steinitz}
    S(B) \leq S^*(B) \leq (1 + \rho(B)) S(B)
\end{equation}
where 
\begin{equation}\label{eq:rhoB}
\rho(B) := \max_{v \in B} \| -v \|_B
\end{equation}
measures the asymmetry of $B$. Note that $\rho(B) = 1$ if $B$ is symmetric.
The lower bound in \eqref{relax-orig-Steinitz} is trivial; to see the upper estimate, one has to observe that starting from any family $V$ of $n$ vectors in $B$, the triangle inequality implies that $\| \Sigma(V) \|_B \leq n$, hence $\| - \frac{\Sigma(V)}{n} \|_B \leq \rho(B)$. Accordingly, the zero-sum vector family $\big \{v -\frac{\Sigma(V)}{n}  : v\in V \big \}$ lies in $(1 + \rho(B))B$, and the estimate readily follows.

We note that there are further variants of Definition~\ref{def:relaxedsteinitzconstant} (see e.g. \cite{banaszczyk1990note}), although these are not directly related to our subsequent results.

Theorem~\ref{thm:EuclSteinitz}, proved by Steinitz, justifies that $S(B_2^d)$ and, via \eqref{relax-orig-Steinitz}, that $S^*(B_2^d)$ are well defined. The proof can be extended to any symmetric norm. For asymmetric norms, the justification of Definitions \ref{def:steinitzconstant} and \ref{def:relaxedsteinitzconstant} is implied by \eqref{relax-orig-Steinitz} and the following general bound, which is a direct corollary of the results obtained for arbitrary $B$-norms in 1978 by Sevastyanov \cite{SevastyanovSteinitz} and in 1980 by Grinberg and Sevastyanov \cite{grinberg_sevastyanov_ValueOfSteinitzConstant}.

\begin{theorem}[The Steinitz Lemma for general norms \cite{grinberg_sevastyanov_ValueOfSteinitzConstant,SevastyanovSteinitz}] 
\label{thm:GeneralSteinitz}
For any convex body $B \in \K_o^d$,
\begin{equation}\label{eq:steinitzgeneral}
    S(B)\leq d.
\end{equation}    
\end{theorem}

The bound is tight for non-symmetric convex bodies, as is shown by taking $B$ to be the regular simplex centered at the origin and choosing $V$ to be the set of its vertices, whereas it is sharp by the order of magnitude for symmetric norms, which is confirmed by the inequality $S(B_1^d)\geq (d+1)/2$, see~\cite{grinberg_sevastyanov_ValueOfSteinitzConstant}. An example in the Euclidean case~\cite{damsteeg1950steinitz, grinberg_sevastyanov_ValueOfSteinitzConstant}  shows that $S(B_2^d) \geq \sqrt{d +3}/2$ must hold. For symmetric $B \in K_o^d$, the estimate in \eqref{eq:steinitzgeneral} can be strengthened to $d - 1 + \frac 1 d$, see \cite{Sevastyanov91}.

We note that more general versions of Theorem~\ref{thm:GeneralSteinitz} were proved 
for convex sets not necessarily containing the origin, without assuming the zero sum condition 
\cite{grinberg_sevastyanov_ValueOfSteinitzConstant,Sevastyanov80}.

Via \eqref{relax-orig-Steinitz}, Theorem~\ref{thm:GeneralSteinitz} readily implies the bound
\begin{equation*}
    S^*(B)\leq (1 + \rho(B))d, 
\end{equation*}
which also follows from the results in~\cite{grinberg_sevastyanov_ValueOfSteinitzConstant}. In particular, $S^*(B) \leq 2 d$ holds for symmetric $B \in \K_o^d$.

The following long-standing conjecture of Bergström~\cite{bergstrom1931Zwei}, be it confirmed, would yield a much stronger estimate on the Steinitz constant in the Euclidean case:

\begin{conj}\label{SteinitzEucConj}
    For all $d\geq 1$, $S(B_2^d)=O(\sqrt{d})$.
\end{conj}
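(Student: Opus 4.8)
The plan is not to attack Conjecture~\ref{SteinitzEucConj} head on, but to reduce the Euclidean Steinitz problem to its restriction to \emph{nearly unit} vectors. Concretely, I would try to prove an inequality of the form $S(B_2^d)\le S_\theta(d)+O(\sqrt{d/\log d})$, where $\theta$ is a small fixed constant (say $\theta=\tfrac13$) and $S_\theta(d)$ is the Steinitz constant of $B_2^d$ restricted to finite families all of whose members have norm in $[\theta,1]$. Conjecture~\ref{SteinitzEucConj} would then follow from the special case $S_\theta(d)=O(\sqrt d)$, so the plan splits into a reduction step, which I believe is feasible, and this special case, which I do not know how to handle.

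For the reduction I would argue as follows. Given $V\subset B_2^d$ with $\Sigma(V)=0$, set aside the short vectors $V_s=\{v\in V:\ |v|<\theta\}$. Sweeping through $V_s$ in any order and starting a new block each time the running sum first attains norm $\theta$, every block sums to a vector of norm in $[\theta,2\theta)\subseteq[\theta,1)$, since adjoining a short vector moves the running norm by less than $\theta$; a single leftover block of norm $<\theta$ is easily absorbed into a neighbour. Replacing each block by its sum turns $V$ into a family $V'$ of nearly unit vectors with $\Sigma(V')=0$. Now take an ordering of $V'$ witnessing $S_\theta(d)$ and re-expand each block in place, ordering the short vectors inside it so that the running sum stays close to the chord joining the block's two endpoints; writing $E(\theta,d)$ for the largest such internal deviation, the resulting ordering of $V$ has all partial sums at most $S_\theta(d)+E(\theta,d)$, hence $S(B_2^d)\le S_\theta(d)+E(\theta,d)$.

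Everything then comes down to bounding $E(\theta,d)$, and this is the step I expect to be delicate. Ordering a block internally is itself a Steinitz problem for vectors of norm $<\theta$; since a block of $m$ such vectors summing to $w$ has $m>|w|/\theta$, its centred vectors $u-w/m$ have norm $<2\theta$, and feeding this into Theorem~\ref{thm:GeneralSteinitz} only gives $E(\theta,d)=O(\theta d)$ --- the useless estimate $O(d)$ for constant $\theta$, while shrinking $\theta$ enough to help would make the reduction vacuous. To get $E(\theta,d)=O(\sqrt{d/\log d})$ with $\theta$ bounded away from $0$ one must arrange the inner orderings far more efficiently than $S\le d$ allows --- indeed more efficiently than even the still-conjectural $O(\sqrt d)$ would give --- and the natural source of the extra $\sqrt{\log d}$ saving is a Gaussian-measure / Banaszczyk-type balancing argument for these blocks, which is presumably where the real work of the reduction lies.

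The genuine obstacle, however, is not in the reduction but in its hypothesis: proving $S_\theta(d)=O(\sqrt d)$ is essentially Conjecture~\ref{SteinitzEucConj} with one scale of freedom removed, and I do not see how to do it with current techniques. So the realistic yield of this line of attack is the reduction $S(B_2^d)\le S_\theta(d)+O(\sqrt{d/\log d})$ itself --- the statement that, up to the stated additive error, the Euclidean Steinitz problem need only be understood for nearly unit vectors --- rather than the conjecture.
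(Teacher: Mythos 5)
You correctly recognize that the statement is an open conjecture and that the realistic yield of this line of thought is a reduction of $S(B_2^d)$ to its ``nearly unit'' restriction with an additive $O\big(\sqrt{d/\log d}\big)$ error --- which is exactly what the paper proves as Theorem~\ref{general theorem}. However, your proposed reduction mechanism is genuinely different from the paper's, and the difference is precisely what creates the gap you yourself flag. You form blocks by sweeping through the short vectors in arbitrary order and cutting whenever the running sum first reaches norm $\theta$. As a result, the vectors in a block share no common direction, so after re-expanding a block in place you are faced with a genuine internal Steinitz problem; as you note, centring and applying Theorem~\ref{thm:GeneralSteinitz} only gives $O(\theta d)$, and you have no mechanism for the hoped-for $O\big(\sqrt{d/\log d}\big)$ internal bound beyond a speculative appeal to Banaszczyk-type balancing. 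That is a real hole, not a delicate step.

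The paper closes this hole by grouping by \emph{direction}, not by arbitrary sweep order. Fix a threshold $t$ and repeatedly peel off a block $V_\alpha$ whose vectors all lie in a common ball-cone $K_t(u_\alpha)$ and whose sum has norm in $[\tfrac1\eps-1,\tfrac1\eps]$; because every $v\in V_\alpha$ has $\langle v/|v|,u_\alpha\rangle\ge t$, the projection onto $u_\alpha$ of a running sum within a block is monotone increasing, so \emph{any} sub-sum $T\subseteq V_\alpha$ satisfies $|\Sigma(T)|\le \tfrac1{\eps t}$ regardless of internal ordering (Lemma~\ref{longvectorslemma}). Thus no internal re-ordering is needed at all, and your bottleneck simply disappears. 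The residue $R$ that cannot be packed into any such cone is controlled by a spherical averaging argument (Lemma~\ref{shortvectorslemma}): writing $v$ as $\sigma_t^{-1}\int_{S^{d-1}} v\,\chi_{K_t(u)}(v)\,d\sigma(u)$ and using property (iii) of the decomposition yields $|\Sigma(S)|\le \tfrac{1}{\eps\sigma_t}$ for every $S\subseteq R$. Finally, the normalised block sums $w_\alpha=\eps\,\Sigma(V_\alpha)$ have norm in $[1-\eps,1]$ and are ordered via the $\eps$-Steinitz constant (whose definition drops the zero-sum requirement, so no ``absorb the leftover block'' fix is needed). The $\sqrt{d/\log d}$ saving then comes from optimising $t$: taking $t=\sqrt{\log d/(2d)}$ and proving $\sigma_t\gtrsim t$ (Lemma~\ref{sigmalemma}) makes both $1/t$ and $1/\sigma_t$ of order $\sqrt{d/\log d}$. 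In short, the extra $\sqrt{\log d}$ is a geometry-of-caps phenomenon, not a discrepancy-balancing one; identifying the ball-cone decomposition as the right primitive is the missing idea in your proposal.
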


\noindent
We note that a matching bound is also expected to hold for the maximum norm $\|.\|_\infty$ on $\R^d$. 

So far, Conjecture~\ref{SteinitzEucConj}, that is sometimes also called the Euclidean Steinitz problem, has refuted all attempts. Our aim in this short note is to reduce the investigation to families consisting of `nearly unit' vectors at the cost of an additive $O(1)$ error in the case of general norms.

To that end, we introduce the restricted variants of the Steinitz constants for `nearly unit' vectors: the subscript `$\eps$' will mean that only families of vectors are considered whose members have norm in the interval $[1 - \eps, 1]$.

\begin{definition}[$\eps$-Steinitz constants]\label{def:epsSteinitzconstant}
   For $B\in \mathcal{K}_0^d$ and $0 \leq \eps \leq 1$, let $S^*_\eps(B)$ denote the smallest constant $C$ for which any finite family $V \subset \R^d$ consisting of vectors of $\|.\|_B$-norm in $[1-\eps, 1]$ may be ordered as $V=\{ v_1, \ldots, v_n \}$ so that 
   \begin{equation*}
    \Big\| \sum_{i \in [k]}  v_i  - \frac k n \Sigma(V) \Big\|_B \leq C
   \end{equation*}
holds for every $k = 1, \ldots, n$. Furthermore, let $S_\eps(B)$ denote analogous quantity for vector families that satisfy the extra condition $\Sigma(V) = 0$.
\end{definition}

\noindent
Note that for any $0 \leq \eps \leq 1$,  $S_0(B) \leq S_\eps(B) \leq S_1(B) = S(B)$,  $S^*_0(B) \leq S^*_\eps(B) \leq S^*_1(B)= S^*(B)$, and $S_\eps(B)\leq  S^*_\eps(B)$. Thus, \eqref{relax-orig-Steinitz} ensures that 
\begin{equation}\label{epssteinitz_orig}
    S^*_\eps(B)\leq 2 S(B)
\end{equation}
for symmetric norms, while 
\[
S^*_\eps(B)\leq (1 + \rho(B)) S(B)
\]
holds for arbitrary $B \in \K_o^d$. 

Furthermore, observe that setting $\eps = 0$ restricts the problem to families of unit vectors. In the Euclidean case, a construction given by Damsteeg and Halperin~\cite{damsteeg1950steinitz} implies that 
\begin{equation} \label{eq:S0}
  \Omega(\sqrt{d}) \leq S_0(B_2^d) \leq S^*_0(B_2^d) \leq S^*_\eps(B_2^d).
\end{equation}

Our result establishes a reverse estimate of \eqref{epssteinitz_orig} that holds for any norm.

\begin{theorem}\label{general theorem}
For all $d\geq 2$, any convex body $B \in \K_o^d$, and $0<\varepsilon \leq 1$,
\begin{equation}\label{SteinitzBound}
        S(B)< \frac{1}{\varepsilon}\Big(S^*_\eps(B)+2\rho(B)+1\Big).
    \end{equation}
\end{theorem}

In the case that $B$ is symmetric, the bound simplifies to $\frac{1}{\varepsilon}(S_\varepsilon^*(B)+3)$. In particular, an $o(d)$ bound on $S^*_\eps(B_2^d)$ for some fixed $0 < \eps \leq 1$
would yield an $o(d)$ estimate on $S(B_2^d)$, hence improving the currently strongest bound. Moreover, \eqref{eq:S0} and \eqref{SteinitzBound} imply that Conjecture~\ref{SteinitzEucConj} is equivalent to the statement that $S^*_\eps(B_2^d) = O(\sqrt{d})$ for some constant $\eps \in (0,1]$.

\section{Excerpts from the History of the Steinitz Lemma}\label{history}

Despite being more than a century old, the story of the Steinitz lemma is still far from complete. In the following, we provide an overview that lists some of the main, and often forgotten, steps in its development.

Every mathematician knows the Riemann series theorem, dating more than 150 years ago~\cite{RiemannRearrangementThm}, which states that given a conditionally convergent series, one can rearrange it to converge to any real number. In other words, for any real series, the set of all sums of its rearrangements is either empty (meaning that the series is divergent), a single point (the series is absolutely convergent), or the entire real line. 
It is natural to ask for the extension of this result to series consisting of complex numbers and, even more generally, of vectors in $\R^d$. In his very first article in 1905, at the age of 19, Lévy~\cite{levy1905series} addressed this question and announced the following result.

\begin{theorem}[Lévy-Steinitz theorem]\label{LevySteinitz}
Given a series of vectors in $\R^d$, the set of all sums of its rearrangements is either empty, or it forms an affine subspace of $\ \R^d$. 
\end{theorem}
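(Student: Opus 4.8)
The plan is to show that the \emph{sum range} $\Gamma$ of a series $\sum_{i=1}^\infty a_i$ in $\R^d$ --- the set of all vectors $s$ such that some rearrangement converges to $s$ --- is either empty or a translate of a linear subspace. Assume $\Gamma \neq \emptyset$ and fix one attainable sum $s_0 \in \Gamma$. The first step is to reduce to the case $s_0 = 0$ by subtracting: after rearranging so that the series converges to $s_0$, the shifted series $\sum (a_i')$ obtained by this same ordering converges to $0$, and translating by $-s_0$ identifies $\Gamma$ with the sum range of the shifted series. So it suffices to prove: if $\Gamma \ni 0$, then $\Gamma$ is a linear subspace, i.e. $\Gamma$ is closed under addition and scalar multiplication (equivalently, $\Gamma = \lin\, \Gamma$). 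Since any convergent rearrangement forces $a_i \to 0$, we may also discard finitely many terms and assume $|a_i| \le 1$ for all $i$, and more usefully, group the tail into blocks whose individual norms are as small as we like.

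The heart of the argument is the \emph{Rearrangement Lemma}, which is exactly where the Steinitz Lemma (Theorem~\ref{thm:GeneralSteinitz}) enters: given finitely many vectors $v_1, \ldots, v_n \in B_2^d$ and a target point $p$ lying in the convex hull of the partial-sum-type configurations (more precisely, in a suitable neighbourhood of $\conv$ of the $v_i$'s arising from sign/order choices), one can reorder them so that \emph{every} partial sum stays within distance $O(d)$ of $p$ scaled appropriately. Concretely I would prove: if $v_1, \dots, v_n \in \R^d$ have norm $\le \delta$ and $\Sigma(\{v_i\}) = w$, then for any ordering-compatible ``path target'' the partial sums $\sum_{i \le k} v_{\pi(i)}$ can be made to hug the segment from $0$ to $w$ within an error of $C(d)\,\delta$; this is the quantitative Steinitz statement applied to the vectors $v_i - \tfrac{1}{n} w$ (which sum to $0$ and lie in $2\delta B_2^d$). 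Using this, given that $0 \in \Gamma$ and given a second point $x \in \Gamma$, I want to build a single rearrangement converging to $x + y$ for any $y \in \Gamma$ (closure under addition) and to $tx$ for $t \in [0,1]$ (which together with additivity and the fact that $\Gamma$ is closed --- a separate easy step --- yields closure under all real scalars, hence $\Gamma$ is an affine, indeed linear, subspace).

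The construction interleaves three bookkeeping devices on disjoint blocks of indices: (i) a block arranged (via the rearrangement built from a convergent ordering for $0$) to contribute nearly $0$ while keeping partial sums small; (ii) a block arranged to contribute nearly $x$; (iii) a block arranged to contribute nearly $y$ (or nearly $tx$); and then one repeats, shrinking the allowed error geometrically by pushing further down the tail where $\sup_i |a_i|$ is small, so the $C(d)\,\delta$ errors from the Rearrangement Lemma are summable. The crucial point making this legitimate is that every index is used exactly once and the blocks exhaust $\N$, so the result is a genuine rearrangement; and between blocks the running partial sum is controlled, so convergence to the intended limit follows. I expect the main obstacle to be the careful \emph{combinatorial scheduling}: ensuring simultaneously that (a) each natural number index is assigned to exactly one block, (b) within each block the Steinitz ordering can be applied (which needs the block's vectors to be small, forcing blocks to be taken far out in the tail), and (c) the partial sums at block boundaries converge --- these three requirements pull against each other and reconciling them is the technical core, together with verifying that $\Gamma$ is closed (so that the dense set of sums $tx + y$ one constructs actually fills out the whole subspace).
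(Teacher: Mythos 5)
First, a point of scope: the paper does not prove the L\'evy--Steinitz theorem at all --- it is stated in the historical survey (Section~\ref{history}) as background, with the proofs attributed to L\'evy, Steinitz, Gross, Bergstr\"om, Halperin and Rosenthal. So there is no proof in the paper to compare against; what follows assesses your outline against the classical arguments you are implicitly reconstructing.

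Your sketch has the right general shape (the Steinitz lemma as a quantitative confinement tool, blocks taken deep in the tail so that the $C(d)\,\delta$ errors are summable), but it is missing the actual heart of the classical proof. The step you treat as available --- finding, arbitrarily far out in the tail, finite blocks of indices whose sums are ``nearly $x$'', ``nearly $y$'', or ``nearly $tx$'' --- does not follow from $x,y\in\Gamma$. A rearrangement converging to $x$ only tells you that certain \emph{initial segments} of the rearranged series have sums near $x$, and those segments necessarily contain early indices, which your disjoint-block scheme has already consumed; the blocks coming from the orderings for $x$ and for $y$ also overlap. In the standard proofs this is exactly the nontrivial point, and it is handled by duality: one introduces $M$, the set of linear functionals $f$ with $\sum_i|f(a_i)|<\infty$, shows via a Hahn--Banach separation argument that every vector of $M^\perp$ lies in the closed convex hull of the subset sums of every tail, and then uses a ``round-off'' lemma (a Steinitz/Carath\'eodory-type statement) to upgrade approximation by convex combinations of subset sums to approximation by genuine subset sums. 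The conclusion is the characterization $\Gamma=s_0+M^\perp$, from which the affine-subspace statement is immediate. Without some substitute for this separation step, your scheduling scheme cannot get started, however carefully the combinatorics are arranged.

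There is also a gap in your algebraic reduction: closure of $\Gamma$ under addition and under scalars $t\in[0,1]$, together with topological closedness, only yields a closed convex cone, not a linear subspace --- closure under negation (that $x\in\Gamma$ forces $-x\in\Gamma$) does not follow from what you propose to prove and would need its own argument. In the classical approach this issue never arises, because the identification of $\Gamma$ with the translate $s_0+M^\perp$ produces a genuine subspace directly.
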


Recall that affine subspaces of $\R^d$ are of the form $L + x$ where $L \subseteq \R^d$ is a linear subspace and $x \in \R^d$.

Unfortunately, the proof of Lévy contained serious gaps for dimensions $d \geq 3$, as pointed out by Steinitz~\cite{Steinitz1913}  in 1913. In turn, he gave the first complete proof of Theorem~\ref{LevySteinitz}, known today as the Lévy-Steinitz theorem.  
Steinitz's work is quite technical and wide-scoped: it was published in three parts ~\cite{Steinitz1913, Steinitz1914, Steinitz1916}, with total length summing well over 100 pages. A key step of his proof is Theorem~\ref{thm:EuclSteinitz} (see \cite[p.171]{Steinitz1913}), which he stated with $C = 2d$.

Independently of Lévy and Steinitz, Gross~\cite{Gross1917} also found a shorter proof for Theorem~\ref{LevySteinitz} that is reminiscent of Steinitz's method. His approach is again based on the rediscovered Steinitz lemma, yet it yields only the weaker constant $S(B_2^d) \leq 2^d -1$, which is an inevitable consequence of the induction dimension technique he applies. He also provides a geometric reformulation of Theorem~\ref{thm:EuclSteinitz}: given any closed polygonal path in $\R^d$ starting at the origin with side lengths not exceeding 1, it is possible to rearrange the order of its sides so that the resulting polygonal path does not leave the ball of radius $C$. This later led to the alternate title ``polygonal confinement theorem'' for Theorem~\ref{thm:EuclSteinitz} (cf. Rosenthal~\cite{rosenthal1987remarkable}).

Gross was not the last one to rediscover the Steinitz lemma. In 1931, Bergström published two papers on the topic. In the first \cite{bergstrom1931neuer}, he gives an alternative proof for Theorem~\ref{LevySteinitz}. The crux of his proof is again Theorem~\ref{thm:EuclSteinitz}, which he considers to be of interest on its own, and proves by induction on the dimension, leading to the estimate $S(B_2^d) \leq \sqrt{(4^d -1)/3}$. His second article~\cite{bergstrom1931Zwei} concentrates solely on the Steinitz lemma. He proves that any family of vectors $V \subset B_2^2$ with $\Sigma(V)=0$ can be arranged to form a closed polygonal path that fits in a circle of radius $\sqrt{5}/2$ (not necessarily centered at the origin), leading  to the upper bound $S(B_2^2) \leq  (\sqrt{5}+1)/2$ in the plane. Regarding the question in arbitrary dimensions, he formulates Conjecture~\ref{SteinitzEucConj}.

In 1936, Hadwiger \cite{hadwiger1936satz} became the first one to study the Steinitz lemma (which he attributes to Gross and Bergström) for series in general inner product spaces: he manages to bound the norm of partial sums in terms of the number of vectors $n$. His attention then turned to the extension of the Lévy-Steinitz theorem to abstract Hilbert spaces~\cite{hadwiger1940umordnungsproblem} and finite-dimensional vector spaces~\cite{hadwiger1942satz}.

Returning to the Euclidean case, Damsteeg and Halperin~\cite{damsteeg1950steinitz} provided a construction of Euclidean unit vectors establishing $\frac 1 2 \sqrt{d + 3} \leq S_0(B_2^d) \leq S(B_2^d)$, which implies that the $O(\sqrt{d})$ bound conjectured by Bergström would be  optimal by the order of magnitude (this is also shown by considering the vertex set of a centered regular simplex). 

In 1954, Behrend entered the scene \cite{behrend1954steinitz} and by a refinement of Steinitz's original method strengthened the estimate to $S(B_2^d) < d$ for every $d \geq 3$. He also showed that $S(B_2^3) \leq \sqrt{5 + 2 \sqrt{3}}$. 

Much of the above information had been blocked by the iron curtain. Although Theorem~\ref{thm:EuclSteinitz} is noted to be a `well-known lemma of Steinitz'~\cite{grinberg_sevastyanov_ValueOfSteinitzConstant},  Kadets~\cite{Kadets} only rediscovered Bergström's estimate $S(B_2^d) \leq \sqrt{(4^d-1)/3}$ in 1953.
Twenty years later, Sevastyanov studied several variants of the question, and introduced the compact vector summation problem. In 1973, he rediscovered and re-proved the planar case of the Steinitz lemma with the bound $S(B_2^2) \leq \sqrt{3}$, see~\cite{Sevastyanov75}. In the higher dimensional case, he gave an efficient algorithm for ordering a vector family as in Definition~\ref{def:steinitzconstant} so as to achieve the bound in Theorem~\ref{thm:GeneralSteinitz}. This result extended Behrend's bound up to losing the strict inequality. Two years later, a joint paper of Grinberg and Sevastyanov \cite{grinberg_sevastyanov_ValueOfSteinitzConstant} considered a more general problem of ordering vectors from a given family $V\subset B\in \K_o^d$ with an arbitrary sum $\Sigma(V)$. Their result implied Theorem \ref{thm:GeneralSteinitz} as a corollary. Moreover, they showed that the bound \eqref{eq:steinitzgeneral} on the Steinitz constant is tight on the class of asymmetric norms.
    In \cite{Sevastyanov91}, Sevastyanov further strengthened the estimate to $S(B) \leq d - 1 + \frac 1 d$ (thus confirming Banaszczyk's claim \cite{Banaszczyk1987}). 
For further developments related to  algorithmic aspects, see~\cite{SevSurvey1998,SevSurvey94}.

Meanwhile in Hungary, independently of the work in the USSR, Fiala~\cite{FialaSteinitz} also rediscovered (after Sevastyanov~\cite{Sevastyanov74} and Belov and Stolin~\cite{BelovStolin}) the connection between the flow shop problem and the Steinitz lemma, and re-proved the latter in the planar case. Inspired by his work, Bárány~\cite{barany1981vector} proved the bound $S(B)\leq 3d/2$ for symmetric $B$'s (note that this is weaker than the estimate in~\cite{grinberg_sevastyanov_ValueOfSteinitzConstant, SevastyanovSteinitz}).

Still in the 1980s, Halperin \cite{halperin1986sums} applied a variation of Lévy's method to obtain an elementary proof of Theorem~\ref{LevySteinitz}, using the Grinberg-Sevastyanov variant of the Steinitz lemma, called here as the `Polygon Rearrangement Theorem'. He also studies the question in $L_p$ and $\ell_p$ spaces for $0 < p \leq \infty$. In an expository article, Rosenthal~\cite{rosenthal1987remarkable} presented the Lévy-Steinitz theorem along the Gross-Steinitz approach.  He cites the Steinitz lemma as the `Polygonal Confinement Theorem' with the weak bound $S(B_2^d) \leq O(2^d)$, apparently unaware of its stronger forms.

Concentrating on the planar case, Banaszyczyk showed in~\cite{Banaszczyk1987} that $S(B) \leq 3/2$ for any symmetric $B \subset \R^2$, and this bound is achieved when $B$ is a square centered at the origin. In~\cite{banaszczyk1990note}, he determined the exact value of the planar Euclidean Steinitz constant:   $S(B_2^2) =\sqrt{5}/2$.

A possible approach for attacking Conjecture~\ref{SteinitzEucConj} is via Chobanyan's transference theorem, which gives an explicit connection between the Steinitz constant and the sign-sequence constant $E(B)$, where one is asked to assign signs to vectors of a (potentially infinite) sequence of vectors selected from $B$ so that all partial sums are bounded by $E(B)$. Chobanyan's result~\cite{chobanyan, chobanyan1987structure} shows that $S(B)\leq E(B)$. For further information about the sign-sequence constant, see the survey article of Bárány~\cite{Ba10}.

This brief summary is too short for listing further related topics in detail, such as coordinate-dependent Steinitz bounds for the maximum norm (see e.g.~\cite{BanSevMaxNorm}), various extensions of the Lévy-Steinitz theorem to infinite-dimensional spaces (see e.g. \cite{katznelson1974conditionally}), or colorful versions of the Steinitz lemma (see~\cite{barany2024matrix, oertel2024colorful}). Questions on vector sums have a vast literature: the bibliography of Halperin and Ando~\cite{halperin1989bibliography} lists 257 works up to 1988, with at least a hundred related articles having been published since then.

\section{Proof of Theorem \ref{general theorem}}

We will use the following notation: for any unit vector $u\in S^{d-1}$ (the Euclidean unit sphere in $\R^d$), the closed positive halfspace orthogonal to $u$ is
\begin{equation*}
    H_+(u):=\{x\in\mathbb{R}^d:\ \langle x, u\rangle \geq 0\}.
\end{equation*}

\begin{proof}[Proof of Theorem \ref{general theorem}]
Fix $0<\varepsilon<1$, $d\geq 2$, and  $B \in \K_o^d$, and suppose that we are given a finite vector family $V\subset B$ with $\Sigma(V)=0$; our goal is to order $V$ in such a way that all partial sums are bounded by the right-hand side of \eqref{SteinitzBound}. For simplicity, we will write $\|.\| := \|.\|_B$ throughout the proof, omitting the dependence on $B$.

The first step is to partition $V$ as
\begin{equation*}
    V=\Big(\bigcup_{\alpha\in A} V_\alpha\Big)\ \bigcup\  R,
\end{equation*}
where  $A$ is an index set of cardinality $m$,
satisfying the following properties:
\begin{itemize}
    \item[(i)] For each $\alpha\in A$, there exists $u_\alpha\in S^{d-1}$ such that $V_\alpha\subset H_+(u_\alpha)$
    \item[(ii)] For each $\alpha\in A$,
    \begin{equation*}\label{sumbounds}
        \frac{1}{\varepsilon}-1\leq \|\Sigma(V_\alpha)\| < \frac 1 \eps.
    \end{equation*}
    \item[(iii)] For any $u\in S^{d-1}$, and any subset $T\subseteq R$,
    \begin{equation*}
        \big\|\Sigma(T\cap H_+(u))\big\|<1/\varepsilon.
    \end{equation*}
\end{itemize}

Note that we intentionally use an unordered index set $A$ of cardinality $m$, rather than $A=[m]$, to emphasize that the vectors are not yet ordered. 

We define the families $V_\alpha$ via the following process: initialize $R:=V$. As long as there exist $u\in S^{d-1}$ and $T\subseteq R$ such that
\begin{equation}\label{sumnorm}
    \Big\|\Sigma(T\cap H_+(u))\Big\|\geq \frac{1}{\varepsilon}-1,
\end{equation}
we set $u_\alpha := u$ and select a containment-wise minimal subfamilies $V_\alpha\subseteq T\cap H_+(u_\alpha)$ so that 
\begin{equation}\label{eq:mincrit}
 \|\Sigma(V_\alpha)\| \geq \frac{1}{\varepsilon}-1
\end{equation}
holds. By the minimality condition, the triangle inequality guarantees that $\|\Sigma(V_\alpha)\| <  \frac 1 \eps$, therefore $V_\alpha$ satisfies property (ii).
We then update $R:=V\setminus V_\alpha$ and proceed until there is no choice of a vector $u\in S^{d-1}$ and $T\subseteq R$ satisfying \eqref{sumnorm}. 

It is an immediate consequence of the construction that properties (i)--(iii) are satisfied for the partition of $V$ obtained above.

Next, for every $\alpha\in A$, let $w_\alpha:=\varepsilon\cdot \Sigma(V_\alpha)$, and define 
$W:=\{w_\alpha\}_{\alpha\in A}$. Property (ii) yields that for every $ \alpha\in A$,
\begin{equation*}
    1-\varepsilon\leq \|w_\alpha\|< 1.
\end{equation*}
Thus, by the definition of $S^*_\eps(B)$, there exists an ordering $w_1,...,w_m$ of $W$ such that for any $j\in[m]$, 
\begin{equation}\label{Worder}
    \Big\|\sum_{i\in[j]}w_i-\frac{j}{m}\Sigma(W)\Big\|\leq S^*_\eps(B).
\end{equation}
Further, as $\Sigma(V)=0$ and $V=(\bigcup_{\alpha\in A} V_\alpha)\bigcup R$, 
\begin{equation*}\label{Wsum}
    \Sigma(W)=\varepsilon\sum_{\alpha\in A}\Sigma(V_\alpha)=-\varepsilon\Sigma(R),
\end{equation*}
and combinig this with \eqref{Worder} yields that for any $j \in [m]$,
\begin{equation}\label{Wbound}
    \Big\|\sum_{i\in[j]}w_i+\varepsilon \frac{j}{m}\Sigma(R)\Big\|\leq S^*_\eps(B).
\end{equation}

We now order our original collection of vectors $V$ as follows: fix the ordering of $W$ as above so that \eqref{Worder} holds, and note that this ordering of $W=\{w_\alpha\}_{\alpha\in A}$ induces a matching ordering $V_1,...,V_m$ of the families $\{V_\alpha\}_{\alpha\in A}$. Within each family $V_i$ for $i\in[m]$ we order the vectors $v\in V_i$ arbitrarily. Finally, we also order the remaining vectors in $R$ arbitrarily. Along this ordering, two types of partial sums occur:

\begin{itemize}
    \item[(a)] $\sum_{i\in[j]} \Sigma(V_i) + \Sigma(U)$ for $0 \leq j\leq m-1$ and $U\subset V_{j+1}$ (where $U=\emptyset$ is allowed);
    \item[(b)] $\sum_{i\in[m]}\Sigma(V_i)+\Sigma(T)$ with some $T\subseteq R$.
\end{itemize}
To prove \eqref{SteinitzBound}, we need to show that any partial sum of type (a) or (b) has norm at most $\frac{1}{\varepsilon}\Big(S^*_\eps(B)+2 \rho(B) +1\Big)$. 
Recall that by property (iii) of our construction, for any subset $T'\subseteq R$ and any $u\in S^{d-1}$,
\begin{equation*}
    \|\Sigma(T'\cap H_+(u))\|<1/\varepsilon.
\end{equation*}
Fix any direction $u\in S^{d-1}$, and partition $T'$ as
\begin{equation*}
    T'_+:=T'\cap H_+(u), \ \ \ T'_-:= T' \setminus T'_+ .
\end{equation*} 
Note that these are subfamilies of $R$, moreover, $T'_- \subset H_+(-u)$. Therefore, property (iii) implies that 
\begin{equation*}
    \|\Sigma(T'_+)\| < \frac 1 \eps \ \textrm{ and } \ \|\Sigma(T'_-)\|<\frac 1 \eps,
\end{equation*}
thus by the triangle inequality
\begin{equation}\label{halfspacepartition}
   \|\Sigma(T')\|< \frac 2 \varepsilon.
\end{equation}

We are ready to estimate the norm of partial sums along the ordering specified above. This is simple for sums of type (b): applying \eqref{halfspacepartition} for the family $T':=R\setminus T$,
\begin{equation*}
\Big\|\sum_{i\in[m]}\Sigma(V_i)+\Sigma(T)\Big\|=\Big\|-\Sigma(R) + \Sigma(T)\Big\|=\Big\| -\Sigma(R\setminus T)\Big\|  \leq \rho(B) \Big\| \Sigma(R\setminus T)\Big\|  < \rho(B) \frac{2}{\varepsilon}.
\end{equation*}
Finally, we handle the sums of type (a). Dividing both sides of \eqref{Wbound} by $\varepsilon$ and applying the triangle inequality yields that for any $0 \leq j\leq m-1$,
\begin{align*}
    \Big\|\sum_{i\in[j]} \Sigma(V_i) \Big\| &\leq 
    \Big\|-\frac{j}{m}\Sigma(R)\Big\|+ \Big\|\sum_{i\in[j]} \Sigma(V_i) + \frac{j}{m}\Sigma(R)
     \Big\|    \\
    &\leq
    \frac{j}{m}\|-\Sigma(R)\|+\frac{1}{\varepsilon}S^*_\eps(B)\\
    &<\rho(B)\frac{2}{\varepsilon}+\frac{1}{\varepsilon}S^*_\eps(B),
\end{align*}
where we have used $j< m$ and \eqref{halfspacepartition} with $T'=R$. Recall that $V_{j+1}$ was chosen as a minimal set (with respect to containment) that satisfies inequality \eqref{eq:mincrit}, that is, $\frac{1}{\varepsilon}-1\leq \|\Sigma(V_{j+1})\|$. In particular, for any $U\subset V_{j+1}$, we know that $\|\Sigma(U)\| <  \frac 1 \eps - 1 < \frac{1}{\varepsilon}$. Combining these estimates, for fixed $0\leq j\leq m-1$ and $U\subset V_{j+1}$ we conclude that 
\begin{equation*}
    \Big\|\sum_{i\in [j]} \Sigma(V_i)+\Sigma(U)\Big\|\leq \Big\|\sum_{i\in [j]} \Sigma(V_i)\Big\|+\Big\|\Sigma(U)\Big\|< \frac{1}{\varepsilon}\Big(S^*_\eps(B)+2\rho(B)+1\Big).
\end{equation*}

We have now shown that the $B$-norm of every partial sum along the specified ordering is strictly less than
\begin{equation*}
\frac{1}{\varepsilon}\left(S^*_\eps(B)+2\rho(B)+1\right).
\qedhere
\end{equation*}
\end{proof}

\noindent 
{\bf Remark.} 
The proof can not be transformed so as to provide an estimate on $S^*(B)$ instead of $S(B)$: since there is no upper bound on the size of the families $V_\alpha$, the size of these re-groupings need not be uniform. Hence, the average of the families $V_i$ and the whole family $V$ may differ drastically, which yields that the quantity in \eqref{epssumbound} cannot be estimated in terms of the individual deviations corresponding to $V_i$. 
\section{Acknowledgements}

We are indebted to Imre Bárány for the fruitful discussions and for providing us with reference~\cite{halperin1989bibliography}, and to the anonymous referees for their suggestions that improved the presentation and led to strengthening the estimate in Theorem 7 and to its extension
to arbitrary norms, as well as to a simplification of its proof.

\bibliographystyle{abbrv}
\bibliography{SteinitzBibliography}

\begin{thebibliography}{10}

\bibitem{Banaszczyk1987}
W.~Banaszczyk.
\newblock {The Steinitz Constant of the Plane}.
\newblock {\em Journal für die reine und angewandte Mathematik}, 373:218--220, 1987.

\bibitem{banaszczyk1990note}
W.~Banaszczyk.
\newblock {A note on the Steinitz constant of the Euclidean plane}.
\newblock {\em CR Math. Rep. Acad. Sci. Canada}, 12(4):97--102, 1990.

\bibitem{barany1981vector}
I.~B{\'a}rány.
\newblock {A Vector-sum Theorem and its Application to Improving Flow Shop Guarantees}.
\newblock {\em Mathematics of Operations Research}, 6(3):445--452, 1981.

\bibitem{Ba10}
I.~B{\'a}rány.
\newblock {On the Power of Linear Dependencies}.
\newblock In {\em Building bridges: Between Mathematics and Computer Science}, volume~19 of {\em Bolyai Society Mathematical Studies}, pages 31--45. Bolyai Society and Springer, 2010.

\bibitem{barany2024matrix}
I.~B{\'a}rány.
\newblock {A matrix version of the Steinitz lemma}.
\newblock {\em Journal f{\"u}r die reine und angewandte Mathematik (Crelles Journal)}, 2024(809):261--267, 2024.

\bibitem{behrend1954steinitz}
F.~Behrend.
\newblock {The Steinitz-Gross theorem on sums of vectors}.
\newblock {\em Canadian Journal of Mathematics}, 6:108--124, 1954.

\bibitem{BelovStolin}
I.~Belov and J.~Stolin.
\newblock {An Algorithm for the Flow Shop Problem}.
\newblock {\em Mathematical Economics and Functional Analysis}, 499, 1974.

\bibitem{bergstrom1931neuer}
V.~Bergstr{\"o}m.
\newblock {Ein neuer Beweis eines Satzes von E. Steinitz}.
\newblock In {\em Abhandlungen aus dem Mathematischen Seminar der Universit{\"a}t Hamburg}, volume~8, pages 148--152. Springer, 1931.

\bibitem{bergstrom1931Zwei}
V.~Bergstr{\"o}m.
\newblock {Zwei S\"atze \"uber ebene Vectorpolygone}.
\newblock In {\em Abhandlungen aus dem Mathematischen Seminar der Universit{\"a}t Hamburg}, volume~8, pages 206--214. Springer, 1931.

\bibitem{chobanyan}
S.~Chobanyan.
\newblock {Convergence A.S. of Rearranged Random Series in Banach Space and Associated Inequalities}.
\newblock In J.~Hoffmann-J{\o}rgensen, J.~Kuelbs, and M.~B. Marcus, editors, {\em Probability in Banach Spaces, 9}, pages 3--29, Boston, MA, 1994. Birkh{\"a}user Boston.

\bibitem{chobanyan1987structure}
S.~A. Chobanyan.
\newblock {Structure of the set of sums of a conditionally convergent series in a normed space}.
\newblock {\em Mathematics of the USSR-Sbornik}, 56(1):49, 1987.

\bibitem{damsteeg1950steinitz}
I.~Damsteeg and I.~Halperin.
\newblock {The Steinitz-Gross theorem on sums of vectors}.
\newblock {\em Trans. Roy. Soc. Canada Sect. III}, 44:31--35, 1950.

\bibitem{FialaSteinitz}
T.~Fiala.
\newblock {An approximate algorithm for the three machine problem}.
\newblock {\em Alkalmazott Matematikai Lapok}, 3:389--398, 1977.
\newblock (in Hungarian).

\bibitem{grinberg_sevastyanov_ValueOfSteinitzConstant}
V.~Grinberg and S.~Sevast'yanov.
\newblock {Value of the Steinitz Constant}.
\newblock {\em Functional Analysis and its Applications}, 14:125--126, 1980.

\bibitem{Gross1917}
W.~Gro{\ss}.
\newblock {Bedingt konvergente Reihen}.
\newblock {\em Monatshefte f{\"u}r Mathematik und Physik}, 28:221--237, 1917.

\bibitem{hadwiger1936satz}
H.~Hadwiger.
\newblock {Ein Satz {\"u}ber geschlossene Vektorpolygone des Hilbertschen Raumes}.
\newblock {\em Mathematische Zeitschrift}, 41(1):732--738, 1936.

\bibitem{hadwiger1940umordnungsproblem}
H.~Hadwiger.
\newblock { {\"U}ber das Umordnungsproblem im Hilbertschen Raum}.
\newblock {\em Mathematische Zeitschrift}, 46(1):70--79, 1940.

\bibitem{hadwiger1942satz}
H.~Hadwiger.
\newblock {Ein Satz {\"u}ber bedingt konvergente Vektorreihen}.
\newblock {\em Mathematische Zeitschrift}, 47(1):663--668, 1942.

\bibitem{halperin1986sums}
I.~Halperin.
\newblock {Sums of a series, permitting rearrangements}.
\newblock {\em CR Math. Rep. Acad. Sci. Canada}, 8(2):87--102, 1986.

\bibitem{halperin1989bibliography}
I.~Halperin and T.~Ando.
\newblock {Bibliography: Series of vectors and Riemann sums}.
\newblock {\em Hokkaido University}, 1989.

\bibitem{Kadets}
M.~Kadets.
\newblock {On a Property of Broken Lines in $n$-Dimensional Space}.
\newblock {\em Uspekhi Mat. Nauk.}, 8(1):139--143, 1953.

\bibitem{katznelson1974conditionally}
Y.~Katznelson and O.~C. McGehee.
\newblock {Conditionally convergent series in $\mathbb{R}^{\infty}$}.
\newblock {\em Michigan Mathematical Journal}, 21(2):97--106, 1974.

\bibitem{levy1905series}
P.~L{\'e}vy.
\newblock {Sur les s{\'e}ries semi-convergentes}.
\newblock {\em Nouvelles annales de Math{\'e}matiques: journal des candidats aux {\'E}coles Polytechnique et Normale}, 5:506--511, 1905.

\bibitem{oertel2024colorful}
T.~Oertel, J.~Paat, and R.~Weismantel.
\newblock {A Colorful Steinitz Lemma with Application to Block-Structured Integer Programs}.
\newblock {\em Mathematical Programming}, 204(1):677--702, 2024.

\bibitem{RiemannRearrangementThm}
B.~Riemann.
\newblock {\em {Über die Darstellbarkeit einer Function durch eine trigonometrische Reihe}}, volume~13.
\newblock Abhandlungen der Königlichen Gesellschaft der Wissenschaften zu Göttingen, 1868.

\bibitem{rosenthal1987remarkable}
P.~Rosenthal.
\newblock {The Remarkable Theorem of L{\'e}vy and Steinitz}.
\newblock {\em The American Mathematical Monthly}, 94(4):342--351, 1987.

\bibitem{SevSurvey1998}
S.~Sevastianov.
\newblock Nonstrict vector summation in multi-operation scheduling.
\newblock {\em Annals of Operations Research}, 83(0):179--212, 1998.

\bibitem{SevSurvey94}
S.~Sevast'janov.
\newblock On some geometric methods in scheduling theory: a survey.
\newblock {\em Discrete Applied Mathematics}, 55(1):59--82, 1994.

\bibitem{BanSevMaxNorm}
S.~Sevast'janov and W.~Banaszczyk.
\newblock {To the Steinitz Lemma in coordinate form}.
\newblock {\em Discrete Mathematics}, 169(1):145--152, 1997.

\bibitem{Sevastyanov74}
S.~V. Sevastyanov.
\newblock {Asymptotical Approach to Some Scheduling Problems}.
\newblock In {\em Third All-Union Conf. Problems of Theoretical Cybernetics}, pages 67--69, Thes. Dokl., Novosibirsk, 1974.
\newblock (in Russian).

\bibitem{Sevastyanov75}
S.~V. Sevastyanov.
\newblock {Asymptotical Approach to Some Scheduling Problems}.
\newblock {\em Upravlyaemye Sistemy}, 14:40--51, 1975.
\newblock (in Russian).

\bibitem{SevastyanovSteinitz}
S.~V. Sevastyanov.
\newblock {On the Approximate Solution of Some Problems of Scheduling Theory}.
\newblock {\em Metody Diskretnogo Analiza}, 32:66--75, 1978.
\newblock (in Russian).

\bibitem{Sevastyanov80}
S.~V. Sevastyanov.
\newblock {Approximation Algorithms for Johnson's and Vector Summation Problems}.
\newblock {\em Upravlyaemye Sistemy}, 20:64--73, 1980.
\newblock (in Russian).

\bibitem{Sevastyanov91}
S.~V. Sevastyanov.
\newblock {On a Compact Vector Summation}.
\newblock {\em Diskretnaya Matematika}, 3:66--72, 1991.
\newblock (in Russian).

\bibitem{Steinitz1913}
E.~Steinitz.
\newblock {Bedingt Konvergente Reihen und Konvexe Systeme}.
\newblock {\em Journal für die reine und angewandte Mathematik}, 143:128--176, 1913.

\bibitem{Steinitz1914}
E.~Steinitz.
\newblock {Bedingt Konvergente Reihen und Konvexe Systeme}.
\newblock {\em Journal für die reine und angewandte Mathematik}, 144:1--40, 1914.

\bibitem{Steinitz1916}
E.~Steinitz.
\newblock {Bedingt Konvergente Reihen und Konvexe Systeme}.
\newblock {\em Journal für die reine und angewandte Mathematik}, 146:1--52, 1916.

\end{thebibliography}

\bigskip

\noindent
{\sc Gergely Ambrus}
\smallskip

\noindent
{\small 
{\em Bolyai Institute, University of Szeged, Hungary \\ and\\ 
 Alfréd Rényi Institute of Mathematics, Budapest, Hungary
 }}
\smallskip

\noindent
e-mail address: \texttt{ambrus@server.math.u-szeged.hu}

\bigskip

\noindent
{\sc Rainie Heck}
\smallskip

\noindent
{\em Alfréd Rényi Institute of Mathematics, Budapest, Hungary\\ and\\
Bolyai Institute, University of Szeged, Hungary  
  }
\smallskip

\noindent
e-mail address: \texttt{rainie@renyi.hu}
\end{document}